\newtheorem{theorem}{Theorem}    
\newtheorem{claim}{Claim} 
\newtheorem{proposition}{Proposition} 
\newtheorem{conjecture}{Conjecture} 
\newcommand{\Z}{\mathbb{Z}}
\title{Framing functions and strengthened version of Dehn's lemma}
\author{Tetsuya Ito}
\address{Research Institute for Mathematical Sciences, Kyoto university
Kyoto, 606-8502, Japan}
\email{tetitoh@kurims.kyoto-u.ac.jp}
\urladdr{http://www.kurims.kyoto-u.ac.jp/~tetitoh/}
\subjclass[2010]{Primary~57M35, Secondary~57M25,57M27}
\keywords{Dehn's lemma, Knot, Framing function}
\thanks{T.I. was partially supported by the Grant-in-Aid for Research Activity start-up, Grant Number 25887030.}
\begin{document}

\begin{abstract}
We give a lower estimate of the framing function of knots, and prove a strengthened version of Dehn's lemma conjectured by Greene-Wiest.
\end{abstract}
\maketitle

Let $K:S^{1} \rightarrow S^{3}$ be an oriented knot in $S^{3}$.
A {\em compressing disc} of $K$ is a smooth map $D:D^{2} \rightarrow S^{3}$ such that $D|_{\partial D^{2}} = K$ and that $D|_{\textrm{Int} D^{2}}$ is transverse to $K$. The intersection points of $D$ and $K$ are called {\em holes}.
In \cite{gw}, Greene and Wiest defined the {\em framing function} $n_{K}: \Z \rightarrow \Z_{\geq 0}$ of a knot $K$ by 
\[ n_{K}(k)= \min \{ \# D \cap K \: | \: D \textrm{ is a compressing disc with } i(D,K) = k\}. \]
Here $i(D,K)$ denotes the algebraic intersection number of $D$ and $K$.

The aim of this paper is to show the following theorem conjectured in \cite{gw}.
\begin{theorem}
\label{theorem:main}
If $K$ is not the unknot in $S^{3}$, then $n_{K}(0) \geq 4$. 
\end{theorem}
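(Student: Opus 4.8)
My strategy is to combine a parity observation with the classical loop theorem, reducing everything to the removal of a single oppositely-signed pair of holes. Throughout write $X = S^{3}\setminus \mathrm{int}\,N(K)$ for the knot exterior, with meridian $\mu$ and $0$-framed longitude $\lambda$ on the torus $\partial X$.

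First I would record two reductions. Putting $D$ in general position, it is a self-transverse immersion meeting $K$ transversally at its holes; each hole has a sign, and because the signed total is $i(D,K)=0$, the positive and negative holes occur in equal numbers. Hence $\#(D\cap K)$ is even whenever $i(D,K)=0$, so $n_{K}(0)$ is even and it suffices to rule out the values $0$ and $2$. The value $0$ is classical: a hole-free $D$, after its interior is pushed off $K$, becomes a singular disc in $X$ bounded by $\lambda$, a curve that is essential on $\partial X$ yet null-homotopic in $X$. The loop theorem then produces an embedded compressing disc for $\partial X$; since knot exteriors are irreducible, $X$ is a solid torus and $K$ is unknotted. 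This is exactly Dehn's lemma and settles $n_{K}(0)=0$.

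Now suppose $D$ has exactly two holes $p_{+},p_{-}$ of opposite sign. Choose an embedded arc $\tau\subset\mathrm{int}\,D^{2}$ joining them whose image $\alpha=D(\tau)$ meets $K$ only at the two points $D(p_{\pm})$, and let $\delta$ be one of the two subarcs of $K$ bounded by those points. The plan is to cancel the pair $\{p_{+},p_{-}\}$ by a Whitney move along the circle $\alpha\cup\delta$: if this circle bounds an embedded disc $W$ whose interior is disjoint from $K$ and meets $D$ only along $\alpha$, then pushing $D$ across $W$ yields a new disc with the same boundary $K$ and no holes, and the case just treated finishes the proof. Equivalently, excising meridional discs about $p_{\pm}$ turns $D$ into a pair of pants mapped into $X$, realizing the peripheral relation $\lambda=(g\mu g^{-1})(h\mu h^{-1})^{-1}$; capping off instead by tubing along $\delta$ exhibits a (singular) once-punctured torus spanning $K$, and the Whitney disc $W$ is precisely a compression of this surface down to a disc.

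The main obstacle is producing the embedded disc $W$, equivalently excluding the possibility that the spanning once-punctured torus is incompressible --- a possibility that by itself would only force the Seifert genus of $K$ to be $1$ (as happens for the trefoil) and would not give the unknot. I would attack this with the loop theorem in the complement of $K$: pushing $\alpha\cup\delta$ off $K$ gives a loop of linking number zero with $K$, and one shows it is null-homotopic in $S^{3}\setminus K$, so a tower construction with the usual innermost-circle and outermost-arc surgeries replaces the singular null-homotopy by an embedded disc. Two points are where the hypotheses must genuinely be used: that the two holes have opposite signs, which is what makes the two meridional boundaries cancel so that the construction closes up along the single longitude rather than a nontrivial peripheral curve; and the arrangement of $W$ to be disjoint from $K$, so that the Whitney move removes holes instead of creating new ones. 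Making these precise is delicate because the obvious algebraic obstruction is too weak --- an Alexander-module computation shows the relation $\lambda=(g\mu g^{-1})(h\mu h^{-1})^{-1}$ can already be satisfied in the metabelian quotient for the trefoil --- so the argument must extract the unknot from the geometry of the embedded disc rather than from homological data alone.
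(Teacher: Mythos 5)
Your reductions are fine: the parity observation shows $n_{K}(0)$ is even, the value $0$ is Dehn's lemma, and the whole content of the theorem is to exclude the value $2$. You also correctly diagnose that this last step cannot follow from homology or metabelian data alone, since a genus-one knot such as the trefoil satisfies the peripheral relation $\lambda=(g\mu g^{-1})(h\mu h^{-1})^{-1}$. But at exactly that point the proposal stops being a proof. The Whitney-move strategy needs an embedded disc $W$ bounded by $\alpha\cup\delta$, with interior disjoint from $K$ and meeting $D$ only along $\alpha$, and you offer no argument that such a disc exists. The assertion that the pushed-off Whitney circle ``is null-homotopic in $S^{3}\setminus K$'' is unsupported: linking number zero does not give null-homotopy, and indeed the natural loops arising from the tubed surface (the co-core of the tube) are meridians, hence essential in the complement. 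Even granting a singular null-homotopy, the loop theorem only returns an embedded disc whose boundary is some nontrivial curve in the relevant normal subgroup; it gives no control ensuring that boundary is the Whitney circle itself, nor that the disc avoids $K$ and meets $D$ only in $\alpha$. These are precisely the ``delicate'' points you flag, and they are not technicalities --- they are the theorem.

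The paper closes this gap by entirely different means. Tubing the two holes produces an immersed once-punctured torus spanning $K$, so Gabai's theorem that immersed Seifert genus equals Seifert genus forces $g(K)=1$. One then passes to the $0$-surgered manifold $M$, caps off both the embedded genus-one Seifert surface and the immersed one to get an incompressible torus $\widehat{F}$ and an immersed torus $\widehat{I}(T^{2})$, arranges (using $\pi_{2}(M)=0$, again Gabai) that all intersection curves are essential, and invokes the Jaco--Shalen--Johannson classification of centralizers in $\pi_{1}(M)$ to show the tube's co-core is null-homologous in $M$ --- contradicting the fact that it is a meridian. The one residual case forces $M$ to be a torus bundle, hence $K$ a trefoil, which is excluded by Greene--Wiest's computation $n_{\mathsf{Trefoil}}(0)=4$. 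Your proposal contains none of this input (Gabai's sutured-manifold results, the JSJ/centralizer structure, the trefoil computation), and without some substitute for it the argument does not go through.
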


As is observed in \cite{gw}, this theorem can be understood as a strengthened version of Dehn's lemma. Dehn's lemma \cite{pa} says that if $K$ admits a compressing disc without holes, then $K$ is the unknot. Theorem \ref{theorem:main} says that one can weaken the hypothesis as $K$ admits a compressing disc with two holes of opposite sign. 

First of all, we give a lower bound of $n_{K}(0)$ in terms of the genus of $K$.
Although this estimate is a direct consequence of Gabai's theorem on immersed Seifert genus, it gives rise to a new insight for the framing function:
The definition and several calculations of the knot framing function in \cite{gw} suggest that $n_{K}$ is related to four-dimensional invariants of knots, like the signature and the unknotting number. The following estimate demonstrates that $n_{K}$ is also related to three-dimensional invariants of knots.

\begin{proposition}
\label{prop:genus}
Let $g(K)$ be the genus of $K$. Then $n_{K}(0) \geq 2g(K)$.
\end{proposition}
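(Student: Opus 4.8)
The plan is to convert a minimizing compressing disc into a singular Seifert surface whose genus we can control, and then to invoke Gabai's theorem that the immersed (singular) Seifert genus of a knot coincides with its genus $g(K)$. Fix a compressing disc $D$ realizing $n_K(0)$, so that $D$ has exactly $n := n_K(0)$ holes and $i(D,K) = 0$. Since the signed count of the intersection points vanishes, the holes split into $n/2$ positive and $n/2$ negative ones; in particular $n$ is even. Orienting $D^2$, each hole acquires a sign, and the feature I will exploit is that the positive holes can be matched bijectively with the negative ones.

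First I would excise a small open disc of $D$ around each hole. As each hole is a transverse intersection of $\mathrm{Int}\, D^2$ with $K$, the resulting boundary circle encircles $K$ once, i.e.\ it is a meridian $m_i$ of $K$, and the sign of the hole records the orientation of $m_i$ as a boundary curve. This produces a planar surface $D'$ (a sphere with $n+1$ holes) mapped into $S^3$ with $\partial D' = K \sqcup m_1 \sqcup \cdots \sqcup m_n$, and whose interior no longer meets $K$. Next, using the pairing of the $n/2$ positive with the $n/2$ negative meridians, I would attach to each pair an annular tube running parallel to $K$ along the torus $\partial N(K)$, gluing its two boundary circles to the two paired meridians. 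Because Gabai's theorem applies to singular surfaces, I am only building a \emph{map} of a surface, so the tubes need not be embedded or mutually disjoint and no routing problem arises.

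A bookkeeping of Euler characteristics then yields the genus: attaching an annular tube to two distinct boundary circles of a connected orientable surface leaves $\chi$ unchanged while decreasing the number of boundary circles by two, hence raises the genus by one. Starting from the planar $D'$ (genus $0$, with $n+1$ boundary circles) and performing the $n/2$ tubings produces a connected orientable surface $F$ of genus $n/2$ with a single boundary circle $K$, together with a map $F \to S^3$ restricting to $K$ on $\partial F$. Gabai's theorem then forces $g(K) \le n/2$, that is $n_K(0) = n \ge 2 g(K)$, as claimed.

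The step requiring the most care — and the only place where the hypothesis $i(D,K)=0$ is genuinely used — is checking that the tubing can be performed compatibly with orientations. Each meridian $m_i$ inherits a boundary orientation from the oriented $D$, and relative to the direction of $K$ a positive hole and a negative hole give oppositely oriented meridians; this is precisely the condition under which an orientable annulus can join them so that $F$ remains orientable. I expect the bulk of the argument to consist of making this orientation matching and the accompanying genus count precise, after which the appeal to Gabai's theorem is immediate.
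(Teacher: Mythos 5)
Your proposal is correct and follows essentially the same route as the paper: delete neighbourhoods of the holes, tube each positive--negative pair along a neighbourhood of $K$ to obtain a singular Seifert surface of genus $n_K(0)/2$, and invoke Gabai's theorem that the immersed Seifert genus equals $g(K)$. The only cosmetic difference is that the paper pairs holes that are adjacent along $K$, while you pair them arbitrarily and rely (correctly) on the fact that only a map of a surface is needed.
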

\begin{proof}
Let $D:D^{2} \rightarrow S^{3}$ be a compressing disc with $m$ positive holes and $m$ negative holes. As is observed in \cite{gw}, we may assume that $D$ is an immersion. For a pair of a positive hole $\mathsf{p}$ and a negative hole $\mathsf{n}$ of $D$ which lie adjacent on $K$, we remove small neighborhoods of $\mathsf{p}$ and $\mathsf{n}$ and attach a thin tube contained in a neighborhood of $K$, as shown in Figure \ref{fig:modification}. This gives rise to an immersion $I: \Sigma=\Sigma_{m,1} \rightarrow S^{3}$, where $\Sigma_{m,1}$ denotes the closed oriented surface of genus $m$ minus an open disc.

\begin{figure}[htbp]
\begin{center}
\includegraphics*[width=70mm]{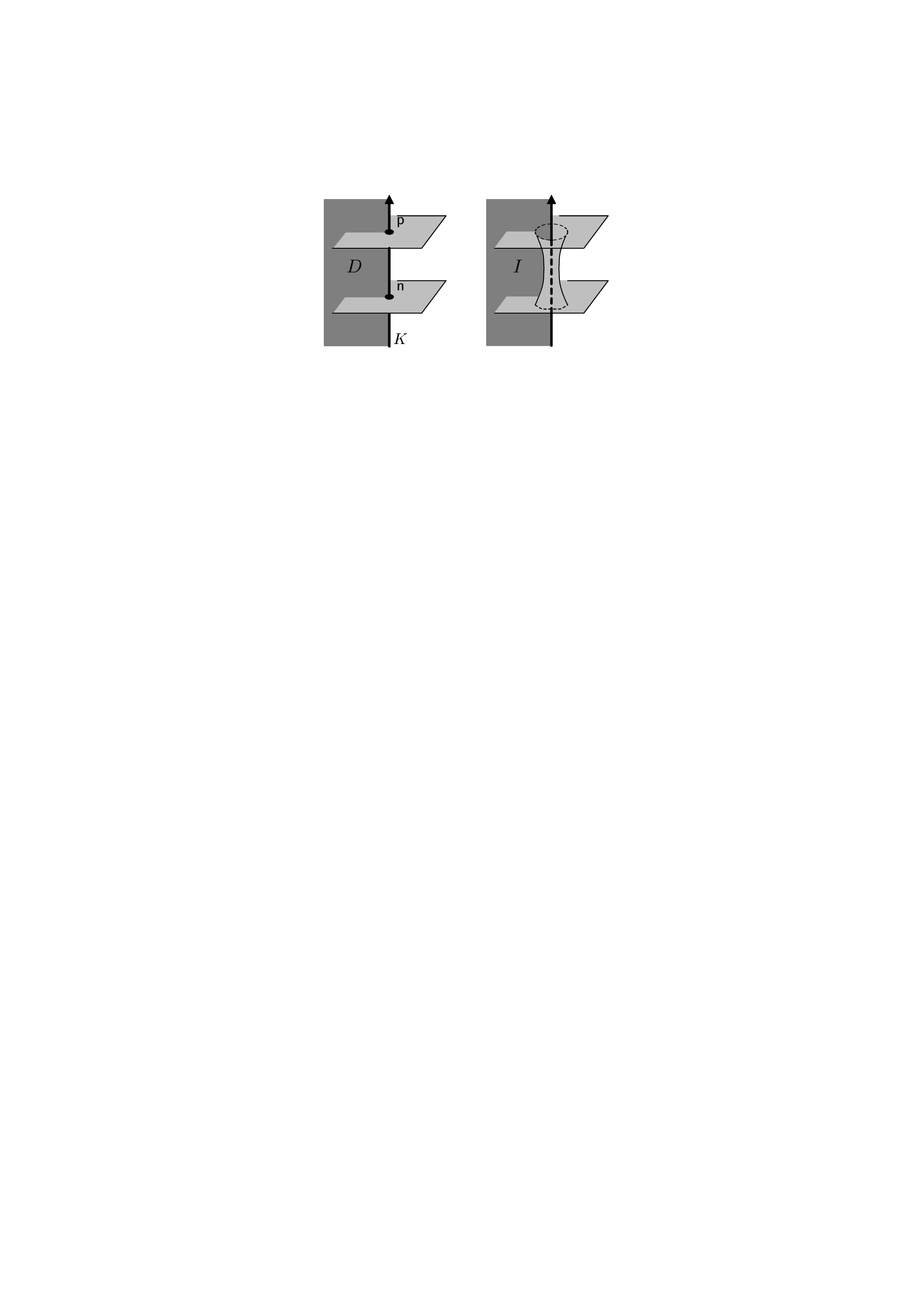}
\caption{Compressing disc $D$ and immersion $I$}
\label{fig:modification}
\end{center}
\end{figure}

The immersion $I$ is an embedding near $K=\partial \Sigma$ and $I^{-1} I(\partial \Sigma) = \partial \Sigma$, so $I$ is an immersed Seifert surface of $K$. Since the immersed Seifert genus is equal to the usual genus of $K$ \cite[Corollary 6.22]{ga}, $g(K) \leq m$. 
\end{proof}

\begin{proof}[Proof of Theorem \ref{theorem:main}]
Assume that there exists a non-trivial knot $K$ with $n_{K}(0)=2$. 
By Proposition \ref{prop:genus}, if $g(K)>1$, then $n_{K}(0)\geq 4$ so $g(K)$ must be one. Let $D$ be a compressing disc of $K$ with exactly one positive and one negative hole.

From the proof of Proposition \ref{prop:genus}, by attaching a tube to $D$ we get an immersed Seifert surface $I: \Sigma_{1,1} \rightarrow S^{3}$ of genus one.
Take a loop $\gamma \subset \Sigma_{1,1}$ so that it is homotopic to the co-core of the attached tube. Then the loop $I|_{\gamma}: S^{1} \rightarrow S^{3}-K$ represents a meridian of $K$ so $[I(\gamma)] \neq 0 \in H_{1}(S^{3}-K)$.

Let $F$ be a genus one, embedded Seifert surface of $K$ and let $M$ be the closed 3-manifold obtained by 0-framed surgery along $K$. 
By attaching one disc to $F$, we get an incompressible torus $\widehat{F}$ in $M$. Similarly by attaching one disc to the boundary of $I(\Sigma_{1,1})$, we get an immersion of a torus $\widehat{I}: T^{2} \rightarrow M$. 

We will show that $[\widehat{I}(\gamma)] = 0 \in H_{1}(M)$. Since the inclusion $S^{3}-N(K) \hookrightarrow M$ induces an isomorphism on the first homology group, this implies that $[I(\gamma)] = 0 \in H_{1}(S^{3}-K)$. This is a contradiction, which completes the proof of the theorem. We remark that $[\widehat{I}(\gamma)] = 0 \in H_{1}(M)$ if and only if the algebraic intersection of $\widehat{I}(\gamma)$ and $\widehat{F}$ is zero. In particular, if the loop $\widehat{I}(\gamma)$ is disjoint from $\widehat{F}$ or lies on $\widehat{F}$, then $[\widehat{I}(\gamma)] = 0 \in H_{1}(M)$. 

First we put the immersed torus $\widehat{I}(T^{2})$ in a nice position. This part of argument does not use the fact that we are considering tori.
We put the immersed torus $\widehat{I}(T^{2})$ so that it is transverse to $\widehat{F}$. By general position argument, we may also assume that $\widehat{F}$ does not contain branch points and triple points of $\widehat{I}(T^{2})$, so $\widehat{F} \cap \widehat{I}(T^{2})$ are immersed circles on $\widehat{F}$ with only double point singularities. Since the double points of such circles correspond to the intersections of the double point curves of $\widehat{I}(T^{2})$ and $\widehat{F}$, the preimage $\widehat{I}^{-1} (\widehat{F} \cap \widehat{I}(T^{2}))$ consists of simple closed curves of $T^{2}$.

\begin{claim}
By homotopy, we may modify $\widehat{I}(T^{2})$ so that each connected component of $\widehat{I}^{-1} (\widehat{F} \cap \widehat{I}(T^{2}))$ is an  essential simple closed curve in $T^{2}$.
\end{claim}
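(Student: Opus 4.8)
The plan is to remove inessential preimage circles one at a time by an innermost-disc homotopy, using two inputs that are available here: the torus $\widehat{F}$ is incompressible in $M$ (as already noted), and $M$ is irreducible. The latter holds because $M$ is the $0$-surgery on a nontrivial knot, so by Gabai's work \cite{ga} it is irreducible; being an orientable irreducible $3$-manifold, the Sphere Theorem gives $\pi_2(M)=0$. These are exactly the facts needed to trade a singular disc bounded on $\widehat{F}$ for a disc lying in $\widehat{F}$.

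Concretely, suppose some component of $\widehat{I}^{-1}(\widehat{F}\cap\widehat{I}(T^{2}))$ is inessential, and choose an innermost such component $c$, bounding a disc $\Delta\subset T^{2}$. Since the preimage curves are disjoint and any simple closed curve in $\mathrm{Int}\,\Delta$ would itself be inessential in $T^{2}$ and more inner than $c$, the innermost choice forces $\mathrm{Int}\,\Delta$ to contain no preimage curve; hence $\widehat{I}(\mathrm{Int}\,\Delta)\cap\widehat{F}=\emptyset$. The loop $\widehat{I}|_{c}$ bounds the singular disc $\widehat{I}|_{\Delta}$ in $M$, so it is null-homotopic in $M$; incompressibility of $\widehat{F}$ then forces $\widehat{I}|_{c}$ to be null-homotopic in $\widehat{F}$ as well. (Note that $\widehat{I}|_{c}$ need not be embedded in $\widehat{F}$, but the argument is homotopy-theoretic and does not require it.)

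Next I would perform the disc exchange. Because $\widehat{F}$ is aspherical, the null-homotopic loop $\widehat{I}|_{c}$ extends to a map $\bar{g}\colon\Delta\to\widehat{F}$. Gluing $\widehat{I}|_{\Delta}$ to $\bar{g}$ along $c$ yields a map $S^{2}\to M$, which is trivial in $\pi_2(M)=0$; hence $\widehat{I}|_{\Delta}$ is homotopic to $\bar{g}$ rel $\partial\Delta$. Replacing $\widehat{I}$ on $\Delta$ by $\bar{g}$ pushed a small distance off $\widehat{F}$, to the side from which the collar of $c$ in $T^{2}\setminus\Delta$ approaches $\widehat{F}$, and smoothing the crossing along $c$, is a homotopy of $\widehat{I}$ that eliminates the intersection circle $c$. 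The pushed-off disc lies off $\widehat{F}$, so no new intersection with $\widehat{F}$ is created (it may acquire new self-intersections of $\widehat{I}$ or intersections with other sheets, which is harmless). Thus the number of preimage circles strictly decreases.

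Iterating, and restoring transversality by a small perturbation after each step, the strictly decreasing finite count guarantees that after finitely many homotopies every remaining component is essential, which proves the Claim. I expect the main obstacle to be this disc-exchange step: one must ensure the homotopy genuinely removes $c$ without reintroducing intersections with $\widehat{F}$, and it is here that irreducibility of $M$ (via $\pi_2(M)=0$) is essential, since without it the singular disc $\widehat{I}|_{\Delta}$ might fail to be homotopic rel boundary to a disc in $\widehat{F}$.
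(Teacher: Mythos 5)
Your proposal is correct and follows essentially the same route as the paper: both use incompressibility of $\widehat{F}$ to make $\widehat{I}$ restricted to an inessential preimage circle null-homotopic in $\widehat{F}$, then use $\pi_2(M)=0$ to trade the singular disc for one in $\widehat{F}$ and push it off, strictly reducing the number of intersection circles (the paper phrases this as a minimal-position contradiction rather than an innermost-disc induction, and cites Gabai directly for $\pi_2(M)=0$ where you derive it from irreducibility of the $0$-surgery plus the Sphere Theorem). These are only cosmetic differences.
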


\begin{proof}[Proof of Claim]
By homotopy, we put $\widehat{I}(T^{2})$ so that the number of the connected components $\widehat{I}^{-1} (\widehat{F} \cap \widehat{I}(T^{2}))$ is minimal (preserving the property that the preimage $\widehat{I}^{-1} (\widehat{F} \cap \widehat{I}(T^{2}))$ consists of simple closed curves). We show that under this minimality assumption, every connected component $\alpha$ of $\widehat{I}^{-1} (\widehat{F} \cap \widehat{I}(T^{2}))$ is an essential simple closed curve on $T^{2}$. 

Assume contrary, a certain connected component $\alpha$ is null-homotopic simple closed curve that bounds a disc $\Delta_{T}$ in $T^{2}$.
Let us put $f_0=\widehat{I}|_{\alpha}:\alpha \rightarrow \widehat{F}$.
Since $\widehat{F}$ is incompressible in $M$, $f_{0}$ is a null-homotopic immersed circle in $T^{2}$ in $\widehat{F}$. Take a homotopy
$f_{s}: \alpha \rightarrow \widehat{F}$ such that $f_{1}(\alpha)$ is a simple closed curve that bounds a disc $\Delta_{F}$ in $\widehat{F}$.
Take a regular neighborhood $N(\alpha) \cong \alpha \times[-1,1] \subset T^{2}$ of $\alpha$ in $T^{2}$, and a regular neighborhood $N(\widehat{F}) \cong \widehat{F} \times [-1,1] \subset M$ of $\widehat{F}$ so that $\widehat{I}(z) = (f_{0}(x),t) \in N(\widehat{F}) \subset M$ for a point $z=(x,t) \in N(\alpha) \subset T^{2}$.

Define the homotopy $\widehat{I}_{s}: T^{2} \rightarrow M$ $(s \in [0,1])$ by
\begin{gather*}
\widehat{I}_{s}(z) = 
\begin{cases}
\widehat{I}(z), & z \not \in N(\alpha),\\
(f_{(1-t)s}(x),t), & z = (x,t) \in \alpha \times [0,1] \subset N(\alpha),\\
(f_{(1+t)s}(x),t), & z = (x,t) \in \alpha \times [-1,0] \subset N(\alpha).
\end{cases}
\end{gather*}
Then $\widehat{I}_{1}(\alpha) = f_{1}(\alpha)$ is a simple closed curve in $\widehat{F}$ bounding a disc $\Delta_{F}$ in $\widehat{F}$. 

Since $\pi_{2}(M)=0$ \cite[Corollary 8.3]{ga3}, by compressing $\Delta_{F}$ (that is, by taking a homotopy of $\widehat{I}_{1}$ so that $\widehat{I}_{1}(\Delta_{T}) = \Delta_{F}$ holds and with additional perturbation), we get a new immersion $\widehat{I'}$ such that the number of connected components of $\widehat{I'}^{-1} (\widehat{F} \cap \widehat{I'}(T^{2}))$ is strictly smaller than original immersion $\widehat{I}$. This is a contradicion.
\end{proof}

Now we utilize the fact the knot we are considering is genus one, so the surfaces we are treating are tori. The crucial point is that the fundamental group of a torus is abelian, so we have several commuting elements of $\pi_{1}(M)$. The knowledge of the structure of centralizers (abelian subgroups) in 3-manifold groups leads to restrictions for the possibilities of such commuting elements, which lead to the desired conclusion $\widehat{I}(\gamma)=0$.

Take one of the connected component $\alpha$ of $\widehat{I}^{-1} (\widehat{F} \cap \widehat{I}(T^{2}))$, and we denote the loop $\widehat{I}|_{\alpha}$ by $A$.
Since $\widehat{F}$ is incompressible in $M$, $A$ is not null-homotopic in $M$.
Take a loop $B$ on $\widehat{F}$ so that $\{A,B\}$ generates a free abelian group of rank two in $\pi_{1}(M)$. For a group $G$ and its element $x \in G$, let $Z_{G}(x) = \{ y \in G \:| \: yx=xy \}$ be the centralizer of $x$ in $G$. As an element of $\pi_{1}(M)$, $\widehat{I}|_{\gamma}$ commutes with $A = \widehat{I}|_{\alpha}$. Thus both $B$ and $\widehat{I}|_{\gamma}$ belong to $Z_{\pi_{1}(M)}(A)$.
There are three possibilities for the structure of the centralizer $Z_{\pi_{1}(M)}(A)$ \cite{j,js,fr}:
\begin{enumerate}
\item[(i)] $Z_{\pi_{1}(M)}(A) \cong \Z$.
\item[(ii)] There exist $h \in \pi_1(M)$ and an incompressible torus $\mathcal{T}$ in $M$ such that $Z_{\pi_{1}(M)}(A) = h\pi_{1}(\mathcal{T}) h^{-1}$.
\item[(iii)] There exist $h \in \pi_1(M)$ and a Seifert fibered component $\mathcal{S}$ in the geometric decomposition of $M$ such that $Z_{\pi_{1}(M)}(A) = hZ_{\pi_{1}(\mathcal{S})}(h^{-1}Ah) h^{-1}$.
\end{enumerate}

The case (i) never occurs since we assumed $A$ and $B$ generates a free abelian group of rank two. If the case (ii) occurs, then $A$ and $B$ are loops on the  $\widehat{F}$ and the loop $\widehat{I}|_{\gamma}$ is homotopic to a loop contained in $\widehat{F}$. This implies $[\widehat{I}(\gamma)] = 0 \in H_{1}(M)$.

Finally assume that the case (iii) occurs, so $\widehat{F}$ is an incompressible torus in $\mathcal{S}$. If $\widehat{F}$ is separating, then the algebraic intersection number of $\widehat{I}(\gamma)$ and $\widehat{F}$ is zero, so $[\widehat{I}(\gamma)] = 0 \in H_{1}(M)$. Hence $\widehat{F}$ must be non-separating.

Assume that $\widehat{F}$ is horizontal surface in $\mathcal{S}$. Since $\widehat{F}$ is non-separating, a connected component of the intersection of $\widehat{F}$ and the base orbifold $\mathcal{O}$ of the Seifert fibration yields a non-separating simple closed curve $C$ in $\mathcal{O}$ representing a non-trivial homology class in $\mathcal{S}$. $\mathcal{S}$ is a Seifert fibered component of the geometric decomposition of $M$, so either $M=\mathcal{S}$, or $M$ is obtained by gluing $\mathcal{S}$ and other 3-manifold $M'$ along their torus boundaries. Since boundary tori of $\mathcal{S}$ is the boundary of $\mathcal{O}$ times regular fiber, gluing other 3-manifold $M'$ does not kill the homology class represented by $C$. Therefore, $C$ also represents a non-trivial homology class in $M$. On the other hand, $C$ is a curve on $\widehat{F}$ so it must represent the trivial homology class of $M$, which is a contradiction.
 
This concludes that $\widehat{F}$ is a vertical, non-separating incompressible torus in $\mathcal{S}$, hence $\mathcal{S}$ is a Seifert fibered 3-manifold which is a torus bundle over the circle. This happens only if $K$ is a trefoil, since 0-framed surgery on a knot in $S^{3}$ yields a fibered 3-manifold if and only if the knot is fibered \cite[Corollary 8.19]{ga3}. However, $n_{\textsf{Trefoil}} (0) = 4$ \cite{gw}, this cannot happen.

\end{proof}

We close the paper by posting the following conjecture which generalizes Greene-Wiest's one.   

\begin{conjecture}
$n_{K}(0) \geq 4g(K)$.
\end{conjecture}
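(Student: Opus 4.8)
The plan is to argue by contradiction, running the skeleton of the proof of Theorem~\ref{theorem:main} at arbitrary genus. Suppose $n_{K}(0)=2m$ with $m<2g(K)$; Proposition~\ref{prop:genus} already gives $g(K)\le m$, so we work in the range $g(K)\le m<2g(K)$. Let $D$ be a compressing disc realizing $n_{K}(0)$, with $m$ positive and $m$ negative holes, and attach $m$ tubes as in the proof of Proposition~\ref{prop:genus} to obtain an immersed Seifert surface $I:\Sigma_{m,1}\rightarrow S^{3}$ of genus $m$ together with co-core loops $\gamma_{1},\dots,\gamma_{m}$, each representing a meridian of $K$. Capping off in the $0$-framed surgery $M$ produces an immersed closed surface $\widehat{I}:\Sigma_{m}\rightarrow M$ whose class equals $[\widehat{F}]\in H_{2}(M)$, where $\widehat{F}$ is the capped-off minimal genus Seifert surface; by Gabai's work \cite{ga3} the surface $\widehat{F}$ is incompressible. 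Exactly as in the genus one case, each $\widehat{I}(\gamma_{i})$ is a meridian and so meets $\widehat{F}$ algebraically once, hence it suffices to homotope a single $\gamma_{i}$ off or onto $\widehat{F}$ to reach a contradiction.

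First I would normalize the intersection. The proof of the Claim is explicitly genus-independent, so after homotopy every component of $\widehat{I}^{-1}(\widehat{F}\cap\widehat{I}(\Sigma_{m}))$ is an essential simple closed curve; call these $\alpha_{1},\dots,\alpha_{k}$. They cut $\Sigma_{m}$ into subsurfaces, each mapped by $\widehat{I}$ into the closure of a single component of $M\setminus\widehat{F}$. Since $\widehat{I}(\gamma_{i})$ meets $\widehat{F}$ exactly once, every $\gamma_{i}$ has algebraic intersection $\pm1$ with the system $\alpha_{1}\cup\cdots\cup\alpha_{k}$ on $\Sigma_{m}$; in particular each $\gamma_{i}$ crosses this system essentially and cannot be homotoped off it. The goal is to use incompressibility of $\widehat{F}$ together with $\pi_{2}(M)=0$ to find a sub-collection of the $\alpha_{j}$, or a subsurface of $\Sigma_{m}$ carrying some $\gamma_{i}$, that can be pushed into $\widehat{F}$.

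Here lies the main obstacle, and I expect it to be a serious one. The proof of Theorem~\ref{theorem:main} uses in an essential way that $\pi_{1}(T^{2})$ is abelian: this is precisely what places $\widehat{I}|_{\gamma}$ in the centralizer $Z_{\pi_{1}(M)}(A)$ of an intersection loop, after which the centralizer trichotomy (i)--(iii) finishes the argument. For $m\ge 2$ the group $\pi_{1}(\Sigma_{m})$ is non-abelian, and two of its elements commute only when they are powers of a common element; generically $\gamma_{i}$ and the $\alpha_{j}$ do not commute, so the centralizer classification no longer applies. A substitute must be found that controls the non-abelian image $\widehat{I}_{*}\pi_{1}(\Sigma_{m})$, for instance via least-area or pleated representatives of $\widehat{I}$ and a relative characteristic-submanifold analysis of how the subsurfaces of $\Sigma_{m}$ sit with respect to $\widehat{F}$ and the geometric decomposition of $M$.

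Finally, it is worth isolating why the factor $2$ separating Proposition~\ref{prop:genus} from the conjecture is the true difficulty. Since $[\widehat{I}(\Sigma_{m})]=[\widehat{F}]$ generates $H_{2}(M)$, any purely homological or Thurston-norm input only reproduces the inequality $m\ge g(K)$, and Gabai's theorem already extracts all of this. The improvement to $m\ge 2g(K)$ must charge each of the $m$ meridian tubes an \emph{independent} geometric cost, and I expect the decisive step to be a sutured-manifold decomposition, or a singular norm inequality, that remembers the meridian decorations $\gamma_{1},\dots,\gamma_{m}$ rather than only the underlying homology class. Establishing such a doubling phenomenon in full generality is exactly the content of the conjecture, and is the step I would expect to resist a direct generalization of the present argument.
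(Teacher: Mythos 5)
The statement you were asked to prove is the paper's closing \emph{Conjecture}, which the author explicitly leaves open: the paper proves it only for $g(K)=1$ (that is Theorem~\ref{theorem:main}) and cites \cite{gw} for torus knots, so there is no proof in the paper to compare yours against. Your proposal, read as a proof, has a gap that is the entire content of the conjecture: after the (correct, and genuinely genus-independent) normalization making every component of $\widehat{I}^{-1}(\widehat{F}\cap\widehat{I}(\Sigma_{m}))$ essential, the paper's argument hinges on the fact that $\widehat{I}|_{\gamma}$ and $\widehat{I}|_{\alpha}$ commute in $\pi_{1}(M)$ because they come from a \emph{torus}, which feeds $\widehat{I}|_{\gamma}$ into the centralizer trichotomy (i)--(iii). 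You correctly observe that for $m\ge 2$ the group $\pi_{1}(\Sigma_{m})$ is non-abelian, that $\gamma_{i}$ and $\alpha_{j}$ need not commute, and that some substitute (least-area representatives, a relative characteristic-submanifold argument, a sutured or singular-norm refinement) would be needed --- but you do not supply one, and you say so. So this is an honest and accurate analysis of the obstruction rather than a proof.

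Two smaller points on your setup, both of which are fine but worth making explicit if you pursue this. First, a minimal disc with $i(D,K)=0$ has $m$ positive and $m$ negative holes, and one can always iterate the adjacent-opposite-pair tubing of Proposition~\ref{prop:genus} (an adjacent $+,-$ pair always exists when the signed count is zero), so the immersed genus-$m$ surface with $m$ meridional co-cores $\gamma_{1},\dots,\gamma_{m}$ does exist. Second, your diagnosis of why homological methods cannot close the gap is correct: $[\widehat{I}(\Sigma_{m})]=[\widehat{F}]$ in $H_{2}(M)$, so Thurston-norm or Gabai-type input can only reproduce $m\ge g(K)$, which is Proposition~\ref{prop:genus}. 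The missing ingredient is precisely a mechanism that charges each meridian tube a cost of $2$ rather than $1$, and identifying that mechanism is what the conjecture asks for.
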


Our results shows that this is the case if $g(K)=1$. This is also true for the case $K$ is a torus knot \cite{gw}.

\end{document}